\renewcommand{\div}{\operatorname{div}}
\newcommand{\ignore}[1]{}
\newtheorem{theorem}{Theorem}
\newtheorem{lemma}[theorem]{Lemma}
\theoremstyle{definition}
\newtheorem{definition}[theorem]{Definition}
\newtheorem{remark}[theorem]{Remark}
\numberwithin{equation}{section}
\numberwithin{theorem}{section}
\newcommand{\m}{\mathbb}
\newcommand{\bbr}{\mathbb{R}}
\newcommand{\bbT}{\mathbb{T}}
\newcommand{\bbe}{\mathbb{E}}
\newcommand{\caln}{\mathcal{N}}
\newcommand{\calp}{\mathcal{P}}
\newcommand{\call}{\mathcal{L}}
\newcommand{\td}{\text{d}}
\newcommand{\diff}{\ \td}
\author{
    Eran Shmaya \thanks{Stony Brook University, United States}
    \and 
    Bruno Ziliotto\thanks{CEREMADE, CNRS, PSL Research Institute, Paris Dauphine University, France.}
}
\title{Bayesian Learning in Mean Field Games}
\begin{document}
\maketitle
\bibliographystyle{plain}
\begin{abstract}
%We consider a mean-field game model where the cost functions depend on a fixed parameter, called \textit{state}, that is unknown to players and normally distributed. Players receive a stream of normally distributed private signals on the state along the game. We derive a mean field system satisfied by the equilibrium payoff of the game with fixed duration, and prove existence of a solution under standard regularity assumptions. Assuming that for each state, the coupling function satisfies the monotonicity assumption of Lasry and Lions, uniqueness of the solution is proved. 
We consider a mean-field game model where the cost functions depend on a fixed parameter, called \textit{state}, which is unknown to players. Players learn about the state from a a stream of private signals they receive throughout the game. We derive a mean field system satisfied by the equilibrium payoff of the game and prove existence of a solution under standard regularity assumptions. Additionally, we establish the uniqueness of the solution when the cost function satisfies the monotonicity assumption of Lasry and Lions at each state.
\end{abstract}
\section*{Introduction}
Mean field games (MFGs, see \cite{LL06a,LL06b,LL07} and \cite{HMC06}) feature a continuum of non-atomic players who control their own positions, and optimize a cost function that depends on their position, their speed, and the distribution positions of the other players. These games have been intensively studied in the last two decades (see e.g. \cite{CF18,CDL19}), and found a wide range of applications. In most of these works, players are assumed to know perfectly the variables determining the cost function. This is in contrast with Game Theory literature, where models with incomplete information have been intensively studied in a wide variety of frameworks (see e.g. \cite{AM95}, \cite[Chapter III]{MSZ}, \cite[Chapter 10]{MSZ2} and \cite{forges92}). Incomplete information is typically on some variable affecting the cost function and the dynamics. Such a variable may be static or dynamic, and players may acquire information on it through the observation of a stream of private or public signals, or through the observation of the other's actions. As far as incomplete information is concerned, MFG literature is rather scarce. Models where players are uninformed of their own state have been studied in \cite{SC16,SC19,FC20,BRT23}, where players do not know the average position of the other players \cite{CJ18,bertucci22}, and where one major player has a private 
information and discloses it strategically through her control to a population of small players \cite{BCR23}. 
%An exciting line of research is to develop new models of games with incomplete information, based nably   \textcolor{red}{talk about MFG learning literature?} 
\paragraph{Contribution of the paper}
In this paper, we introduce a MFG model where cost functions depend on a fixed parameter called \textit{state}. Players do not know the exact value of the state, and receive a stream of private signals along the game. A major issue is the procedure by which the players learn the state of nature from their signals. We adopt the Bayesian inference approach, which is the standard approach in game theory: the players update their beliefs about the state using Bayes' formula. 
In a contemporary paper, \cite{BCR23} also study a MFG model with an unknown state. A major difference between their model and ours is that in their model all players observe the same signal, whereas in our model players observe idiosyncratic signals, conditionally independent on the states. The model in~\cite{BCR23} reduces to MFG with common noise. In contrast, our model does not reduce to MFG with common noise and a different system of equations is needed to describe the equilibrium. 

Incorporating Bayesian inference in mean field games introduces a challenge because the posterior beliefs about the parameter  live in an infinite dimensional space. The work \cite{BCR23}, following \cite{AM95}, assumes a finite state space, so beliefs are in a finite dimensional simplex. Our approach is different. The tractability of our model relies on the assumption that the state and the signals are jointly normally distributed, so all posteriors beliefs are also normal, and are determined by the conditional expectation of the states. Moreover, our signaling process is such that future information depends on the state alone and not on current information. This assumption on the signal process makes the posterior belief a Markov process and simplifies Bayesian updating. 

We derive a PDE system that describes the equilibrium solution of the game, composed with a Hamilton-Jacobi equation and a family of Fokker-Planck equations, indexed by the state. The space variable of each player in this system is a pair $(z,x)$, where $x$ represents the position and $z$ is a variable that aggregates the information of the player about the state. Such a system is not a classic MFG system. Indeed, the state appears explicitly as a parameter in the Fokker-Planck equation, while in the Hamilton-Jacobi equation, it appears only through the belief of players. This reflects the fact that the signal dynamics is driven by the state, while players controls are driven by the information variable $z$. Hence, one can not apply directly existence and uniqueness result to this system. Under standard regularity assumptions, we prove existence of a solution, using the classic method of Shauder fixed-point theorem. 

We then turn to the question of the uniqueness of the solution of our system. A well-known condition under which a standard MFG system has a unique solution is the monotonicity condition of Lions and Lasry \cite{LL07}, which can be intuitively interpreted by the fact that players have a preference for being far away from each other. Assuming that the monotonicity condition is satisfied ``state-by state'', we prove uniqueness of our system. This is rather surprising, since the state is not observed by players; one may have expected instead a monotonicity condition expressed in terms of the distribution of the state given players information, as it is the case for instance in \cite{bertucci22}, in a different setting. %The proof uses crucially the fact that the state and signals are normally distributed. 
%provided Moreover, we prove uniqueness of the solution, under an assumption that can be viewed as a parametrized version of the classic monotonicity assumption \cite{LL07}. It is rather surprising that such a monotonicity assumption can be expressed explicitly in terms of the state, since the latter is not observed by players. Instead, one may have expected an assumption involving  (\textcolor{red}{detail more}). 
%The "state of nature" in game theory refer to the different possible configurations or scenarios that encapsulate the players' incomplete information. Each player has some information 
\paragraph{Related game-theoretic models}
The mean-field game literature parallels the game theoretic literature on large games with anonymous players. Here anonymity means that each players' payoff depends on opponents' actions and types only through their empirical distribution, but not on the identity of the opponent that played each action. The literature starts with static games that model one-shot interaction. A canonical model in this literature, the so called \emph{continuum of players} or \emph{non-atomic} approach~\cite{schmeidler1973equilibrium,mas1984theorem}, assumes that idiosyncratic randomizations corresponding to different players actions completely wash out in the aggregate, so that the outcome of the game is fully deterministic. The non-atomic model can also allow some idiosyncratic characteristics of the players, or types in the game-theoretic terminology, again under the assumption that there is no aggregate uncertainty about the distribution of types.  There are many results that relate the asymptotic outcomes of anonymous games with $N$ players, where $N$ goes to infinity with the outcome of the corresponding non-atomic model (see e.g. ~\cite{carmona2022approximation}).

A seminal paper of Green~\cite{green1984continuum} combined large games with (discrete-time) dynamic games. Green's paper and most of its follow-up study games with perfect information, in which the fundamentals of the game are known to the players. Green studied repeated games, in which the players play the same game at every period. Jovanovic~\cite{jovanovic1988anonymous}, Bergin and Bernhadt~\cite{bergin1992anonymous}, and Hoppenhayn~\cite{hopenhayn1992entry} studied large games in which the payoff depends on the history of the game and stochastic games with some aggregated uncertainty.

There is a huge literature on Bayesian learning in game theory (See Young~\cite[Chapter 7]{young2004strategic}). Most of these papers are about games with finite number of players. 
Kalai and Shmaya~\cite{kalai2018large} seem to be the first paper that studies dynamic non-atomic games in which the fundamental of the game are not known initially. Their model also includes Bayesian learning of the fundamentals. In addition to the fact that their model is in discrete time, a major difference between the model in this paper and their model is that the signals in Kalai and Shmaya's paper are commonly observed by all players so at every period all players hold the same belief about the state of nature, whereas in our paper beliefs are idiosyncratic. 
\paragraph{Paper outline}
%The rest of the paper is organized as follows. 
Section \ref{sec:model} describes the game model, while Section \ref{sec:system} establishes the corresponding MFG system and relates it to the state of the art. Section \ref{sec:existence} proves existence of a classical solution of the MFG system, under standard regularity assumptions. Section \ref{sec:uniqueness} proves uniqueness under a monotonicity assumption. 
\paragraph{Notations}

Throughout the paper, the notation $\caln(m,v)$ stands for the normal distribution with mean $m$ and variance $v$. The $n$-dimensional torus, defined as the quotient of $\m{R}^n$ by $\m{Z}^n$, is denoted by  $\m{T}^n$. All finite-dimensional spaces under consideration will be equipped with the Borelian $\sigma$-algebra. Given a measurable set $A$, the notation $\mathcal{P}(A)$ stands for the probability distributions over $A$.       %:=\displaystyle \sfrac{\m{R}^n}{\m{Z}^n}$ 
\section{Model} \label{sec:model}
\subsection{A control problem}\label{se:control}
The MFG system considered in this paper is interpreted as a game with a continuum of players, where each player faces the control problem described by the following elements:
\begin{itemize}
\item
A random \emph{state} $S$ in $\bbr$, 
  unknown to the player, with distribution $\caln(0,1)$. 
\item
A signalling process $(Z_t)$ for the player, that is a process on $\bbr$ that follows the stochastic differential equation (SDE)
\begin{equation}\label{dzt}
dZ_t=Sdt+\sigma dB_t,
\end{equation}
with $\sigma > 0$.
\item
A position $X_t$ lying in the $n$-dimensional torus $\m{T}^n$, whose trajectory is controlled by the player, according to the equation
\begin{equation}\label{dxt}
dX_t=\alpha_t dt+\sigma'dB'_t,
\end{equation}
where $(B_t)$ and $(B'_t)$ are independent Brownians, and the control variable $\alpha=(\alpha_t)_{t\ge 0}$ is measurable with respect to the filtration generated by $(X_t,Z_t)$. When we want to emphasize the dependence of the player's signal and position process on $\alpha$ we also use the notation $(X^\alpha_t,Z^\alpha_t)$. The position space is taken as the torus for simplification purpose. 
\item
A {\emph{position distribution}}  $\rho_{s,t}\in \calp(\bbT^n)$, with the interpretation that $\rho_{s,t}$ is the distribution density over positions at time $t$, conditional on the state being $s$, 
\item
A \textit{total cost} that is a function of the control $(\alpha_t)_{t\ge 0}$ used by the player, defined by

\begin{equation}\label{cost}
\m{E} \left( \int_0^T C(S,X_t,\alpha_t)+F(S,X_t,\rho_{S,t}) \diff t \right) + G(S,X_T,\rho_{S,T}),
\end{equation}
where the \emph{cost functions} $C$,$F$, and $G$ satisfy some regularity assumption that will be specified in the next section,  
and the expectation is taken over the random state $S$ and the random path $X_t$ of the player.
The additive structure of the flow cost, with one component that does not depend on the population distribution, and a second component that does not depend on the control, is standard.
Compared to the classic MFG model, the new feature is that the cost functions also depend on the unknown state. 

We assume that the flow cost is not observed by the player, so that all information about the state comes from the signaling process. %The fact that the player does not observe their flow cost is a rather common feature of games with incomplete information (see e.g. REF). 
Section \ref{sec:app} provides examples where such a situation arises. The fact that players do not observe their flow cost is a common feature of several classic incomplete information models, such as Partially Observable Markov Decision Processes and stochastic games with signals \cite{MSZ}. 
 %\textcolor{red}{Dire pourquoi cela fait sens que les joueurs n'observent pas leur coût}
\end{itemize}
\subsection{Equilibrium}
A \textit{game distribution} is a mapping $m=m:\m{R}\times [0,T] \times \m{T}^n\times \m{R}   \rightarrow \m{R}$ such that $m_{s,t}$ is a probability density over $\m{T}^n \times \m{R}$ for every $s,t$. The game distribution will have two related interpretations:
\begin{itemize}
\item  $m_{s,t}$ is the empirical distribution over positions and signals at time $t$ if the state is $s$. 
\item $m_{s,t}$ is the  law of distribution of the (random) position and signal of a player at time $t$ if the state is $s$.  
\end{itemize}
%Given a game distribution $m=(m_{s,t})$, denote by $\hat{m}$ the distribution such that $\hat{m}_{s,t}$ is the marginal of $m_{s,t}$ over positions. 
%Let $\hat m_0$ be an initial player density over $\m{R}^n$, and let $m_0=\hat m_0\times \delta_0$. 
The distribution $m_{s,t}$ is an 
\textit{equilibrium} if there exists a control $\alpha$ such that:\begin{itemize}\item $m_{s,t}=\call(X_{s,t}^\alpha, Z_{s,t}^\alpha)$ is the probability density function of $X_{s,t},Z_{s,t}$.\item %$\alpha$ minimizes the single player cost~\eqref{cost} with $\rho_{s,t}=\hat m_{s,t}$.
$\alpha$ minimizes the single player cost~\eqref{cost} where $\rho_{s,t}$ is the marginal of $m_{s,t}$ over positions.\end{itemize}
The first item captures the assumption that, while each player's trajectory is stochastic due to the randomness of his own Brownian, the population's trajectory is deterministic conditional on the state. In game theoretic terminology, this assumption is called \emph{no aggregated uncertainty}. Note, however, that in our environment, the population dynamic is still random because it depends on the random state. The second item captures the idea that the players take the population dynamic as given and ignore their own impact on it. In game theoretic terminology, players are \emph{outcome takers}~\cite{green1984continuum}. 

Both of this assumptions make sense when the number of players is large. Indeed, the MFG equilibrium, sometimes called \emph{a non-atomic equilibrium} in game theoretic literature~\cite{schmeidler1973equilibrium,mas1984theorem}, approximates a game with many players as follows: Consider a game with $N$ players, each player observes a signals and controls his positions according to~\eqref{dzt} and~\eqref{dxt} where the Brownian components are independent between players. The flow cost to the players depends on the empirical distribution of the players' positions $\frac{1}{N}\sum_i\delta_{x^i_t}$ where $x^i_t$ is the position of player $i$ at time $t$.  The MFG equilibrium strategies given by the control $\alpha$ are $\epsilon$-Nash equilibrium in the finite player game for sufficiently large $N$.

As we mentioned, most of the game-theoretic literature focuses on discrete-time games. The continuous-time modeling is also an approximation of discrete-time players when players take action and receive signals in short intervals, or \emph{frequently} in game theoretic terminology~\cite{sannikov2010role}. More explicitly, in the \emph{discrete-time game with round duration $\Delta$}, at the beginning of each period each player receives a signal from distribution $\caln(s,\sigma^2\Delta)$, where $s$ is the realization of the state, and  signals are i.i.d.\ over players and rounds.  At the beginning of every round, each player is at some position $x_k$, chooses a control $\alpha_k$ and his position moves to $x_{k+1}=x_k+\alpha\Delta+\caln(0,\sigma'^2\Delta)$. At the end of the round, the player receives payoff $\Delta\left(C(S,x_k,\alpha_k)+F(S,x_k,\rho_k)\right)$ where $\rho_k$ is the empirical distribution of positions of all players at round $k$. The game is played for $T/\Delta$ rounds. The MFG equilibrium becomes a standard $\epsilon$-Nash equilibrium in the discrete game for sufficiently small $\Delta$.  
\subsection{Applications} \label{sec:app}
%\st{We do not pretend that our model is directly applicable, given the strong assumptions on the position space and the information structure. Rather, our contribution is to provide a tractable theoretical model that can serve as a}
Our model is a basis for future theoretical developments and applications, including the situations that we mention now. 
\begin{itemize}
\item
\textit{Product differentiation.} consider firms that sell the same type of product (e.g. cellulars). They have a time period $T$ to develop their product. Their position represents the characteristics of the product (size, quality, price...). The state $S$ represents the ideal characteristics of the consumer, and only affects the terminal cost. At time $T$, the product is put on the market. Their aim is that the product characteristics are not far from the ideal of the consumer, and moreover, that their product is far from the one of the other firms. The terminal cost reflects these preferences, and can be taken for instance as $G(s,x,\rho)=|x-s|^2-\int |x-y|^2 \rho(\diff y)$. The flow cost is only a function of the position and the control.
\item
\textit{Portfolio management.} 
Consider investors that put funds in some investment scheme during some time period, (life insurance, hedge fund), such that the return interest rate is only known at the end of the period. The state $S$ represents the interest rate, and the position of each player represents the composition of their portfolio. 

\end{itemize}

\section{Mean field game equations} \label{sec:system}
\subsection{The HJB equation}
Fix the population density $\rho_{s,t}$ over $\m{T}^n$ at time $t$ if the state is $s$. In this section we derive the solution to the single-player control problem from Section~\ref{se:control}.

The tractability of our model relies on the fact that  the aggregated signal $Z_t$ at time $t$ is a sufficient statistic for $S$ given the signals $Z_\tau$ at times $\tau\in [0,t]$. This sufficiency, combined with the fact that the evolution of position $X_t$ in~\eqref{dxt} depends on the signal but not directly on the state, imply that the signal $Z_t$ at time $t$ is also a sufficient statistic for $S$ given the process $(Z_\tau,X_\tau)$ at times $\tau\in [0,t]$. This allows us to reduce the player's problem to a standard control problem with the state variable $(z,x)$.

We first describe the player's evolution of beliefs about the state, which is a standard argument in Bayesian statistics. We denote by $\caln(\mu,\sigma^2)$ the normal law of mean $\mu$ and variance $\sigma^2$, and by $\varphi_{\mu,\sigma^2}$ its probability density function.
\vspace{1cm}
\begin{lemma}\label{le:bayesian}
For all $t \in \m{R}_+$,
\begin{enumerate}[(i)]
\item
  The law of $Z_t|S=s$ is $\caln(st,\sigma^2 t)$
  \item The law of $Z_t$ is $\caln(0,\sigma^2t+t^2)$.
\item
  The law of $S|Z_t=z$ is $\caln(r_t(z),{\sigma_t}^2)$, where 
  \begin{equation}
r_t(z)=\frac{z}{t+\sigma^2} \quad \text{and} \quad \sigma_t^2=\frac{\sigma^2}{\sigma^2+t}
\end{equation}
\end{enumerate}
\end{lemma}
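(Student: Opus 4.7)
The lemma is a conjugate-normal computation, so the plan is to derive each item directly from the expression $Z_t=St+\sigma B_t$ (which follows from integrating the SDE \eqref{dzt} with $Z_0=0$) and the prior $S\sim\caln(0,1)$.

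For (i), I would condition on $\{S=s\}$, in which case $Z_t=st+\sigma B_t$ is just an affine function of the Brownian $B_t\sim\caln(0,t)$ that is independent of $S$. Its conditional distribution is therefore $\caln(st,\sigma^2 t)$, giving (i) immediately. For (ii), I would use that $S$ and $B_t$ are independent, so $Z_t=tS+\sigma B_t$ is a sum of two independent centered normals with variances $t^2$ and $\sigma^2 t$, respectively, yielding $Z_t\sim\caln(0,\sigma^2 t+t^2)$.

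The substance is in (iii), and the plan is to apply Bayes' rule with the normal prior and the Gaussian likelihood obtained in (i). Concretely, the posterior density of $S$ given $Z_t=z$ is proportional to
\[
\varphi_{0,1}(s)\,\varphi_{st,\sigma^2 t}(z)\;\propto\;\exp\!\left(-\tfrac{1}{2}s^2-\tfrac{1}{2\sigma^2 t}(z-st)^2\right).
\]
Expanding the quadratic in $s$ and completing the square identifies a normal density in $s$ with precision $1+t/\sigma^2=(\sigma^2+t)/\sigma^2$, hence variance $\sigma_t^2=\sigma^2/(\sigma^2+t)$, and mean obtained as the precision-weighted average of the prior mean $0$ and the rescaled observation $z/t$, namely
\[
r_t(z)=\sigma_t^2\cdot\frac{z}{\sigma^2}=\frac{z}{\sigma^2+t}.
\]
This matches the stated expressions.

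There is no real obstacle: the argument is a textbook conjugate update, and the only point requiring mild care is the algebra of completing the square and the bookkeeping of the precisions, which I would carry out explicitly to justify (iii). Alternatively, since $(S,Z_t)$ is jointly Gaussian with $\mathrm{Cov}(S,Z_t)=t$, $\Var(Z_t)=\sigma^2 t+t^2$ from (ii), and $\Var(S)=1$, the standard Gaussian conditioning formula gives the conditional mean $\m{E}[S\mid Z_t=z]=t z/(\sigma^2 t+t^2)=z/(\sigma^2+t)$ and conditional variance $1-t^2/(\sigma^2 t+t^2)=\sigma^2/(\sigma^2+t)$, providing an even shorter route to (iii) that simultaneously uses (ii).
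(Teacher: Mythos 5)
Your proof is correct and follows essentially the same route as the paper: the paper compresses items (ii) and (iii) into the single density factorization $\varphi_{0,1}(s)\varphi_{st,\sigma^2 t}(z)=\varphi_{0,\sigma^2 t+t^2}(z)\varphi_{r_t(z),\sigma_t^2}(s)$, and your completing-the-square computation (or equivalently the joint-Gaussian conditioning formula) is exactly the verification of that identity.
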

\begin{proof}
The first item follows from~\eqref{dzt}. The second and third items follow from the fact that
\begin{equation}\label{bayes}\varphi_{0,1}(s)\varphi_{st,\sigma^2t}(z)=\varphi_{0,\sigma^2t+t^2}(z)\varphi_{r_t(z), \sigma_t^2}(s).\end{equation}\end{proof}
Recall that our cost functions $F,C,G$ depend on the state that is not observed by the players.  
Let $\tilde F(t,z,x,\rho), \tilde C(t,z,x,\alpha), \tilde G(z,x,\rho)$ be the player's corresponding expected cost given the signal $z$ at time $t$. More explicitly
\begin{equation}\label{costtilde}\begin{split}&\tilde F(t,z,x,\rho)=\int \varphi_{r_t(z), \sigma^2_t}(s) F(s,x, \rho_{s,t})\diff s,\\&\tilde C(t,z,x,\alpha)=\int \varphi_{r_t(z), \sigma^2_t}(s) C(s,x, \alpha)\diff s,\text{ and}\\&\tilde G(z,x,\rho)=\int \varphi_{r_T(z), \sigma^2_T} (s)G(s,x, \rho_{s,T})\diff s\end{split}\end{equation}

By replacing the integrand in~\eqref{cost} with its conditional expectation at time $\tau$ and then using the sufficiency of the signal process we can rewrite the player's cost function as
\begin{equation}\label{cost-no-s}
\m{E}  \int_0^T \left(\tilde C(t,Z_t,X_t,\alpha_t)+\tilde F(t,Z_t,X_t,\rho_{S,t}) \diff t \right) + \tilde G(t,Z_t,X_T,\rho_{S,T}),
\end{equation}

In addition, it follows from Lemma~\ref{le:bayesian} that the signal process of a single player evolves according the  SDE.  
\begin{equation}\label{dzt-no-s}\td Z_t = r_t(Z_t)\td t + \sigma^2\td B_t.\end{equation}
Therefore, the control problem given by~\eqref{cost-no-s} is  a standard control problem, with state $z,x$ and value function
\begin{equation}\label{udef}
u(t,z,x)=\inf_{\alpha_t}\bbe \int_t^T \left(\tilde F(r,Z_r,X_r,\rho_{S,t})+\tilde C(r,Z_r,X_r,\alpha_t)\right)\diff r + \tilde G(T,Z_T,X_T,\rho_{S,T})\end{equation}
with $X_t=x$ and $Z_t=z$ and evolution given by ~\eqref{dxt} and~\eqref{dzt-no-s}.

For $(t,z,x) \in \m{R}_+ \times \m{R}^n \times \m{R}^m$
and $p \in \m{R}^n$, let
\begin{equation}\label{hdef}
H(t,z,x,p):=\sup_{\alpha}\left\{-\alpha \cdot p - \tilde C(t,z,x,\alpha)\right\}
\end{equation}
The Hamilton-Jacobi-Bellman equation for $u$ is
\begin{equation}\label{hjb}
  -D_tu+H(t,z,x,D_x u)-D_zu \cdot r_t(z)-\frac{\sigma^2}{2}\Delta_z u-
  \frac{{\sigma'}^2}{2} \Delta_x u=\tilde F(t,z,x,m),\end{equation}
  with the terminal condition  
\begin{equation}\label{hjb-terminal}u(T,z,x)= \tilde G(z,x).\end{equation}
Standard verification results (for example~\cite[Lemma 3.1.6]{cardaliaguet10}) imply that if $u(t,x,z)$ is a classical solution to~\eqref{hjb} then $u$ is  the value function of~\eqref{udef} and the optimal control at $(t,z,x)$ is $-D_pH(t,z,x,D_xu)$.
\subsection{The Fokker-Planck Equation}
Assume each agent chooses the control $\alpha(t,z,x)$, which is continuous in $t$ and Holder continuous in $z,x$. Then for every $s\in \bbr$ the SDE
\begin{equation}\label{forwardsde}\begin{split}
\td Z_t=s dt+\sigma dB_t
\td X_t=\alpha_t dt+\sigma'dB'_t.\end{split}
\end{equation}
has a unique solution and the density $m(s,t,z,x)$ of $(Z_t,X_t)$ at time $t$ is a weak solution for the Fokker-Planck equation (See~\cite[Lemma 3.1.3]{cardaliaguet10})
\begin{equation}\label{fp}
  \partial_t m +\div_x(\alpha(t,z,x) m)+s\cdot \div_zm-\frac{\sigma^2}{2}\Delta_{z}m-  \frac{{\sigma'}^2}{2}\Delta_xm=0
\end{equation}
with the initial condition
\begin{equation}\label{fp-initial}m(s,0,z,x)=\bar\rho(x)\delta_0(z).\end{equation}Here $\bar\rho\in\Delta(\m{T}^n)$ is the initial density of positions, and we assume that the signal process starts with $0$ for all players (See~\cite[Section 3.1.2]{cardaliaguet10})

Recall that the signal $Z_t$ at time $t$ is  a sufficient statistic for $S$ given the process $(Z_\tau,X_\tau)$ at times $\tau\in [0,t]$. The following lemma establishes the implication of sufficiency on the solution to the Fokker-Planck equations~\eqref{fp}: The solution factorizes to two terms, similar to the classical Fisher–Neyman factorization theorem.  In the lemma, $\tau$ does not depend on $s$. Note that $Z_t/t$ is an unbiased estimator for $S$, which explains the drift coefficient that multiplies $div_z\tau$ in the PDE of $\tau$.
We will use the lemma in our proofs of existence and uniqueness of solutions to the MFG system.
\vspace{1cm}
\begin{lemma}\label{le:fp}
The solution to~\eqref{fp} is of the form $m(s,t,z,x)=\varphi_{st,\sigma^2 t}(z)\tau(t,z,x)$ where $\tau(t,z,x)$ is the solution to
\[\partial_t\tau+\div_x(\alpha(t,z,x)\tau)+\frac{z}{t}\div_z\tau-\frac{\sigma^2}{2}\Delta_{z}\tau-  \frac{{\sigma'}^2}{2}\Delta_x\tau=0,\]
with the initial condition
\[\tau(0,z,x)=\bar\rho(x).\]
\end{lemma}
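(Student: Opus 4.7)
The plan is to verify the factorization directly, then appeal to uniqueness of the Fokker--Planck equation. Let $\phi(s,t,z):=\varphi_{st,\sigma^2 t}(z)$ be the Gaussian density of $Z_t \mid S=s$ from Lemma~\ref{le:bayesian}(i). Since $\phi$ is the transition density of the one-dimensional SDE $\td Z_t = s\,\td t + \sigma\,\td B_t$ starting at $0$, it solves the forward Kolmogorov equation
\begin{equation*}
\partial_t\phi + s\,\partial_z\phi - \tfrac{\sigma^2}{2}\partial_z^2\phi = 0,\qquad \phi(s,0,\cdot)=\delta_0.
\end{equation*}
Let $\tau(t,z,x)$ be the (classical or weak) solution to the PDE given in the statement with initial datum $\tau(0,z,x)=\bar\rho(x)$; existence and uniqueness for this linear parabolic equation with sufficiently regular drift $\alpha$ is standard.

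Next I would substitute the ansatz $m(s,t,z,x)=\phi(s,t,z)\,\tau(t,z,x)$ into the Fokker--Planck equation~\eqref{fp} and use the product rule. Writing out $\partial_t(\phi\tau)$, $\div_x(\alpha\phi\tau)=\phi\,\div_x(\alpha\tau)$, $\partial_z(\phi\tau)$ and $\partial_z^2(\phi\tau)$, the terms proportional to $\tau$ (with no derivative on $\tau$) collect into $\tau\bigl[\partial_t\phi+s\,\partial_z\phi-\tfrac{\sigma^2}{2}\partial_z^2\phi\bigr]$, which vanishes by the Kolmogorov equation above. Dividing the remaining identity by $\phi$, the only surviving first-order $z$-term is
\begin{equation*}
s\,\partial_z\tau - \sigma^2\,\frac{\partial_z\phi}{\phi}\,\partial_z\tau = \left(s+\frac{z-st}{t}\right)\partial_z\tau = \frac{z}{t}\,\partial_z\tau,
\end{equation*}
using the explicit logarithmic derivative $\partial_z\phi/\phi = -(z-st)/(\sigma^2 t)$. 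The crucial cancellation is the disappearance of $s$, which is exactly the PDE-level manifestation of the sufficiency of $Z_t$ for $S$ alluded to before the lemma. The remaining terms $\phi\,\div_x(\alpha\tau)$, $-\tfrac{\sigma^2}{2}\phi\,\partial_z^2\tau$ and $-\tfrac{\sigma'^2}{2}\phi\,\Delta_x\tau$ reproduce, after dividing by $\phi$, precisely the equation imposed on $\tau$.

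For the initial condition, as $t\downarrow 0$ the Gaussian $\phi(s,t,\cdot)$ converges to $\delta_0$ in the distributional sense while $\tau(t,\cdot,x)\to \bar\rho(x)$, so $\phi\tau\to \bar\rho(x)\delta_0(z)$, matching~\eqref{fp-initial}. Thus $\phi\tau$ is a weak solution of~\eqref{fp}--\eqref{fp-initial}, and by the uniqueness statement cited from~\cite[Lemma 3.1.3]{cardaliaguet10} it coincides with $m$. The only delicate point is the behavior of the drift $z/t$ near $t=0$, but this is harmless because $\tau$ is smooth in $(z,x)$ and the singularity is of the same type as in the heat-equation drift for an Ornstein--Uhlenbeck bridge; I would handle it either by working on $(t_0,T]$ and passing to the limit $t_0\downarrow 0$, or simply by noting that once the factorization is verified on any interval $(0,T]$, continuity in $t$ forces the stated initial datum.
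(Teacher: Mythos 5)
Your proof is correct and follows essentially the same route as the paper's: both verify the product ansatz $m=\varphi_{st,\sigma^2 t}\,\tau$ by the product rule, cancel the terms solving the Kolmogorov equation for the Gaussian factor, and use the logarithmic-derivative identity $\sigma^2\partial_z\varphi=(s-\tfrac{z}{t})\varphi$ to turn the drift $s$ into $z/t$. Your added remarks on uniqueness and the $t\downarrow 0$ behavior are sensible refinements of the same argument.
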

\begin{proof}
Note first that $\varphi_{st,\sigma^2t}$ is the solution to
\[\partial_t\varphi+s\div_z\varphi-\frac{\sigma^2}{2}\Delta_z\varphi=0,\]
with the initial condition
\[\varphi(0,z)=\delta_0(z) .\]
Assume $\varphi$ and $\tau$ satisfy these equations and let $m=\varphi\cdot\tau$. Then
\[\div_x \alpha m=\varphi\div_x\alpha\tau,\text{ and        }\div_zm=\tau\div_z\varphi+\varphi\div_z\tau\]
and
\[\Delta_xm=\varphi\Delta_x\tau\text{ and                  }\Delta_zm=\tau\Delta_z\varphi+2\div_z\varphi\div_z\tau+\varphi\Delta_z\tau\]
Therefore
\begin{align*}
\partial_tm=&\tau\partial_t\varphi+\varphi\partial_t\tau=\\&
\tau(-s\div_z\varphi+\frac{\sigma^2}{2}\Delta_z\varphi)+\varphi(-\div_x(\alpha\tau)-\frac{z}{t}\div_z\tau+\frac{\sigma^2}{2}\Delta_z\tau+  \frac{{\sigma'}^2}{2} \Delta_x\tau)=\\
            &
-\div_x(\alpha m)-s\div_z(m)+\frac{\sigma^2}{2}\Delta_{z}m+  \frac{{\sigma'}^2}{2}\Delta_xm.\end{align*}
where the last equality uses the fact that 
\[\sigma^2\div_z\varphi=(s-\frac{z}{t})\varphi.\]
\end{proof}
\subsection{The MFG system}
\begin{definition}
An \emph{equilibrium} is a pair $(u,m)$ that satisfies the following system in the classical sense: 
$$
 \quad  \left\{
    \begin{array}{ll}
        &   -D_tu+H(t,z,x,D_x u)-D_zu \cdot r_t(z)-\frac{\sigma^2}{2}\Delta_z u-
  \Delta_x u=\tilde F(t,z,x,m)  \\
  &     \partial_t m +\div_x(-D_p H(t,z,x,D_xu) m)+s\cdot \div_zm-\frac{\sigma^2}{2}\Delta_{z}m-  \frac{{\sigma'}^2}{2}\Delta_xm=0
  \\
  & u(T,z,x)= \tilde G(z,x)
  \\ & m(s,0,z,x)=\bar\rho(x)\delta_0(z)
    \end{array}
\right.
$$
\end{definition}
\begin{remark}
The above equations do not constitute a standard MFG system. Indeed, the parameter $s$ appearing in the Fokker-Planck equation is not known to the players. An interesting question is whether such a system can be rewritten with a new set of variables, that are known to the players. This type of transformation is classical in discrete-time zero-sum games,and the new state variables incorporate the \textit{beliefs} of players about the unknown parameters of the game  (see \cite{astrom65} for the 1-Player case and \cite[Chapter IV]{MSZ} for the 2-Player case). In the MFG context, this has been exploited in \cite{bertucci22,BCR23}. In our setting, such variables could be the triple $(z,x,\mu)$, where $\mu$ is a probability measure representing the belief of a player over the set of population distributions. This would yield to a MFG system written on an infinite dimensional state space. 
%This system may be quite helpful to analyze general information structures, but in our normal-distribution setting, the above system is more handy to analyze.
\end{remark}
%\textcolor{blue}{I am not sure about this remark. First, isn't this reduction to a stochastic game only valid to zero-sum games? Second, even if it can be done in n player games the nonatomic players seem like another step}
\subsection{Comparison to previous models}
We now compare our model to the two standard models of mean-field games, first without common noise and then with common noise. 

Recall our players' signaling structure~\eqref{dzt}. For two distinct players $i,j$ their signaling processes $\td Z_t^i$ and $\td Z_t^j$ are stochastically dependent since their drift is governed by the same random variable $S$. Under fixed control functions $\alpha^i(t,z,x)$ and $\alpha^j(t,z,x)$ of the players, their positions $X_t^i$ and $X_t^j$ are therefore also stochastically dependent. 

The canonical model of mean-field games without common noise can be viewed as our model with a fixed state. In this model, unlike our model, for a fixed control functions, the positions of two players are stochastically independent because they are governed by independent noises. 

In stochastic games with common noise the positions of the players is governed by idiosyncratic brownian motion and also by some common brownian noise: If we denote by $Y_t^i$ and $Y_t^j$ the positions of two players under fixed controls  $\alpha^i(t,y)$ and $\alpha^j(t,y)$ then 
\[\td Y_t^i=\alpha^i(t,Y_t^i)+\td B_t^i+\td B_0^i\]
where $B_t^i$ are the idiosyncratic noises and $B_t^0$ is the common noise. 
Because of the common noise, the players' positions are also stochastically dependent, as in our model. However, the nature of the dependence is different: For a fixed $\bar t$, the future process $(Y^i_t)_{t\ge \bar t}$ of player $i$ is conditionally independent of the history $(Y^j_t)_{t\le \bar t}$ of player $j$ given the history $(Y^i_t)_{t\le \bar t}$ of player $i$. In other words, player $i$, who knows his history up to time $\bar t$, would not learn anything new about his future from observing the history of his opponent. This property does not hold in our model, since player $i$ could gain more information about the state $S$ by observing the trajectory of player $j$. 

The difference between our model and these two canonical model can also be seen from the structure of the mean field equations. In the canonical model without common noise there is a single Fokker-Planck equation. We have a paramatrized set of Fokker-Planck equation and their joint solution has to be compatible with the HJB equation through the expected cost function $\tilde F$ in~\eqref{costtilde}. Note that for a fixed $s$, our $m_s$ is not a solution to the corresponding HJB equation with cost $F$, hence our system does not reduce to standard MFG equations. The forward equation of the canonical model with common noise is a stochastic equation governed by the common noise component, while we have a parametrized family of standard Fokker Plank partial differential equations.

In motivation and modeling approaches, our paper is probably most similar to Bertucci's model \cite{bertucci22} of mean field games with incomplete information. In his model, the initial distribution of players positions is random, unknown to the players, and plays a similar role to our unknown state $S$. In particular, his HJB equation also involves an expected cost function under players' belief at that time. In addition to the difference in modeling the information flow, the results are different: In our model uniqueness is obtained under standard monotonicity assumptions on the primitives, while Bertucci shows that this does not hold in his model. 
  
  \section{Existence} \label{sec:existence}
In this section, we provide sufficient conditions under which the MFG system has a classical solution, that are largely inspired by \cite{cardaliaguet10}.
\begin{enumerate}
\item The functions $F$ and $G$ are continuous on $\bbr\times\m{T}^d\times \Delta(\m{T}^d)$.
\item The functions $F(s,\cdot,m)$ and $G(s,\cdot,m)$ are in $C^{1+\beta}(\m{T}^d)$ and $C^{2+\beta}(\m{T}^d)$ for some $\beta\in (0,1)$, with norm that is bounded by a sub-exponential function of $s$, uniformly w.r.t. $m$.
\item The function $F(s,\cdot,\pi)$ and $G(s,\cdot,\pi)$ are bounded in $C^{1+\beta}(\m{T}^d)$ and $C^{2+\beta}(\m{T}^d)$ (for some $\beta\in (0,1)$) uniformly in $s$ and $\pi$.
\item \label{hyp_ex_3}
The cost function $C$ is continuously differentiable in $x,\alpha$ and $C(s,x,\alpha)$ has exponential growth in $s$, and $\alpha \rightarrow C(s,x,\alpha)$ is strongly convex, uniformly in $s$ and $x$. 
%The cost function $C$ is twice continuously differentiable, and $\alpha \rightarrow C(s,x,\alpha)$ is strongly convex, uniformly in $s$ and $x$. Then, the function $\tilde{C}$ enjoys the same properties. 
%Consequently, $H$ is locally Lipschitz in $(t,z,x)$, and $p \rightarrow H(.,p)$ is twice differentiable and has a Lipschitz gradient, where the Lipschitz constant is uniform in $(t,z,x)$.
%\\
%\textcolor{red}{We probably do not need $C$ to be $C^2$: we only need local Lipschitz, 
%that $C$ is differentiable with respect to $p$, and that $D_pH$ is differentiable in $(x,p)$. But it could be safer to make this stronger assumption}. 
%\textcolor{blue}{We need $\tilde C$ to be locally Lipschitz so we need $C$ to be locally L and some condition on the Lipschitz constant being integrable w.r.t to normal distribution. Not sure if $C^2$ is helpful here}
  \item \label{hyp_ex_4}
  The cost function satisfies the following growth condition: there exists $A>0$, for all $(s,x,\alpha)$,
  \begin{equation*}
  D_x C(s,z, x,\alpha) \cdot p \geq -A (1+|\alpha|^2)
  \end{equation*}
%    Together with \ref{hyp_ex_3}, it implies a similar growth condition on $H$: there exists $B>0$, for all $(s,x,\alpha)$, 
%    \begin{equation*}
%  D_{z,x} H(t,z,x,\alpha) \cdot p \geq -B (1+|p|^2).
%   \end{equation*}
  \end{enumerate}
    \begin{theorem}~\label{th:existence}
  Under Assumptions 1-4, the MFG system has a classical solution. 
  \end{theorem}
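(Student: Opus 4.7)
The plan is to apply Schauder's fixed-point theorem to the map $\Phi$ sending a candidate game distribution $m$ to the distribution generated by the best response of a representative player against $m$. The crucial simplification comes from Lemma~\ref{le:fp}: every admissible $m$ factorizes as $m(s,t,z,x)=\varphi_{st,\sigma^2 t}(z)\tau(t,z,x)$, where $\tau$ solves a single linear Fokker--Planck equation that is independent of $s$. It is therefore more convenient to view $\Phi$ as acting on $\tau$ rather than on the parametrized family $(m_{s,\cdot})_{s\in\m{R}}$, and this is the point of view I adopt.

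Given $\tau$ in a suitable candidate set, $\Phi$ is constructed in three steps. First, using the factorization one computes $\tilde F(t,z,x,m)=\int \varphi_{r_t(z),\sigma_t^2}(s)F(s,x,\rho_{s,t})\,\td s$; the Gaussian weight dominates the sub-exponential growth in $s$ permitted by Assumption~2, so $\tilde F$ inherits the Hölder regularity of $F$ in $x$, locally uniformly in $(t,z)$, and similarly for $\tilde G$. Second, one solves the backward HJB equation~\eqref{hjb}--\eqref{hjb-terminal}; Assumption~3 (strong convexity and controlled growth of $C$ in $\alpha$) guarantees that $H$ and $D_pH$ are well-defined and smooth in $(t,z,x,p)$, and combined with the regularity of $\tilde F$ and $\tilde G$ this produces a classical solution $u\in C^{1,2}$ via standard linear parabolic theory applied to the linearization along an iteration. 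Assumption~\ref{hyp_ex_4} supplies the a priori estimate on $\|D_xu\|_\infty$ needed to make the feedback $\alpha^\ast(t,z,x):=-D_pH(t,z,x,D_xu(t,z,x))$ bounded and Hölder continuous in $(z,x)$, uniformly in $\tau$. Third, one solves the reduced Fokker--Planck equation of Lemma~\ref{le:fp} with drift $\alpha^\ast$ to obtain $\tilde\tau=\Phi(\tau)$, again by classical parabolic theory, which also yields parabolic Hölder estimates on $\tilde\tau$ that depend only on the a priori bounds on $\alpha^\ast$.

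The candidate set $\mathcal K$ is then the set of nonnegative $\tau\in C([0,T]\times\m{R}\times\m{T}^n)$ with $\tau(0,\cdot,\cdot)=\bar\rho$, compatible with the mass constraint inherited from the Fokker--Planck equation, and satisfying the parabolic Hölder bounds produced in the previous step. Endowed with the topology of uniform convergence on compact subsets of $[0,T]\times\m{R}\times\m{T}^n$, this set is convex and, by Arzelà--Ascoli, relatively compact; one then closes it. Continuity of $\Phi$ in this topology reduces to stability of linear parabolic equations under locally uniform convergence of coefficients, together with dominated convergence in the integral defining $\tilde F$. Schauder's theorem yields a fixed point, and the associated $(u,m)$ is a classical solution of the MFG system. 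The main technical obstacle is the non-compactness of the $z$-variable: the Gaussian weight $\varphi_{st,\sigma^2 t}(z)$ together with the sub-exponential-in-$s$ and at-most-polynomial-in-$z$ growth of the coefficients is what allows localization in $z$ when deriving compactness, continuity, and the a priori estimates on $u$ and $\tau$.
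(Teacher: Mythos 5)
Your proposal is correct in outline and rests on the same two pillars as the paper's proof --- Schauder's fixed-point theorem and the factorization of Lemma~\ref{le:fp} --- but it parametrizes the fixed-point problem differently, and the difference is worth noting. The paper takes the fixed-point variable to be the state-indexed family of position marginals $\mu\in C^0(\m{R}\times[0,T],\mathcal{P}(\m{T}^d))$, so the non-compact direction it must control is the state axis $s$; compactness of the candidate set requires the extra equicontinuity-in-$s$ condition~\eqref{holder-s}, which is then verified precisely via Lemma~\ref{le:fp}. You instead take the fixed-point variable to be the reduced density $\tau(t,z,x)$ itself, using the lemma up front to strip out the $s$-dependence entirely; the price is that the non-compact direction becomes $z$, which you handle through the Gaussian weight $\varphi_{st,\sigma^2 t}(z)$ and the growth hypotheses --- correctly identified as the main technical obstacle. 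Your version is arguably tidier, since the fixed point is a single function rather than a parametrized family, and the reconstruction $\rho_{s,t}(x)=\int\varphi_{st,\sigma^2 t}(z)\tau(t,z,x)\,\td z$ makes the $s$-regularity of the marginals explicit rather than an imposed constraint. Both arguments leave the same steps at the same level of detail (a priori gradient bounds for the HJB on the unbounded $z$-domain, invariance $\Phi(\mathcal K)\subseteq\mathcal K$ via uniform parabolic estimates, and continuity of the solution map under locally uniform convergence of coefficients), so neither is more complete than the other; they are two parametrizations of the same Schauder scheme. One small bookkeeping remark: what you cite as ``Assumption 3'' (strong convexity of $C$ in $\alpha$) is Assumption 4 in the paper's enumeration.
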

Before we prove Theorem~\ref{th:existence}, we explain how the assumptions on the cost function translate to properties of the Hamiltonian.
  The function $\tilde C$ enjoys the same properties of $C$. Consequently, the maximum $\alpha^\ast(t,z,x,p)$ is attained in~\eqref{hdef} and the Hamiltonian is differentiable w.r.t. $p$ and $x$ with
\begin{equation}\label{hdifs}\begin{split}
&D_p H = -\alpha^\ast(t,z,x,p)\\
&D_{z,x} H = -D_{z,x} \tilde C(t,z,x,\alpha^\ast(t,z,x,p))
\end{split}\end{equation}
In addition, $D_pH$ is globally Lipschitz, where the Lipschitz constant is uniform in $(t,z,x)$ and  depends only on the strong convexity constant of $C$~\cite{zhou2018fenchel}.  

The global Lipschitz property property on $D_pH$ and~\eqref{hdifs} implies that $\alpha^\ast(t,z,x,p)$ has a linear growth in $p$. Together with the growth condition on $\tilde C$ and~\eqref{hdifs} again it implies a  growth condition on $H$: there exists $B>0$, for all $(s,x,\alpha)$, 
    \begin{equation*}
  D_{z,x} H(t,z,x,\alpha) \cdot p \geq -B (1+|p|^2).
  \end{equation*}

  \begin{proof}
  Consider the space $C^0(\m{R}\times [0,T],\mathcal{P}(\m{T}^d))$ of all possible {position distribution} $\mu$, with the interpretation that $\mu(s,t)$ is the empirical distribution of players' positions at time $t$ if state is $s$.
The assumptions on $F$ and $G$ imply that $\tilde F(t,z,x,\mu)$ and $\tilde G(z,x,\mu)$ are bounded in $C^{1+\beta}$ and $C^{2+\beta}$ uniformly w.r.t. $\mu,t$ and that the maps $\mu \mapsto ((t,z,x) \rightarrow \tilde{F}(t,z,x,\mu))$ and $\mu \mapsto ((z,x) \rightarrow \tilde{G}(z,x,\mu))$ from $C^0(\m{R}\times [0,T],\mathcal{P}(\m{T}^d))$ to $C^0([0,T] \times \m{T}^d \times \m{R})$ are continuous.

For a large $C>0$ and, let $\mathcal{C}$ be the set of elements $\mu$ in $C^0(\m{R}\times [0,T],\mathcal{P}(\m{T}^d))$ such that
\begin{align}
&\label{holder-t}
d_1(\mu(s,t),\mu(s,t')) \leq C |t-t'|^{1/2},\text{ and}\\
  &\label{holder-s}
    d_1(\mu(s,t),\mu(s',t)) \leq C |s-s'|^{1/2}.
\end{align}
Then $\mathcal{C}$ is a convex closed subset of $C^0(\m{R}\times [0,T],\m{T}^d)$, which we equip with the topology of compact convergence. It is compact, thanks to Arzela-Ascoli theorem. 

Let $\mu \in \mathcal{C}$. We associate $\nu=\Psi(\mu)$ in the following way: 
Let $u$ be the unique solution to 
% \begin{equation}\label{hjb}
%  -D_tu+H(t,z,x,D_x u)-D_zu \cdot r_t(z)-
%  \Delta_x u-\Delta_z u=\tilde F(t,z,x,m).\end{equation}
\begin{equation} \label{existence_HJB}
    \begin{cases}
       &  -D_tu+H(t,z,x,D_x u)-D_zu \cdot r_t(z)-
  \Delta_x u-\Delta_z u=\tilde F(t,z,x,\mu)\\
      & u(z,x,T)=\tilde{G}(t,z,x,\mu(T))  \\
      %0 & \text{otherwise}
    \end{cases}       
\end{equation}
The existence of a solution follows from~\cite[Corollary A.2.2]{cardaliaguet10}, the growth condition on $H$ and the  regularity of $\tilde F(t,z,x,\mu)$ and $\tilde G(z,x,\mu)$.
Then we define $m: S \times [0,T] \times \m{R}^n \times \m{R}$ such as for any $s \in \m{R}$, $m(s,.)$ is the solution of the Fokker-Planck equation 
\begin{equation}
    \begin{cases}
       &    \partial_t m +\div_x(-D_pH(t,z,x,D_xu) m)+s\cdot \div_zm-\frac{\sigma^2}{2}\Delta_{z}m-  \frac{{\sigma'}^2}{2} \Delta_xm=0 \\
      & m(.,0)=m_0  \\
      %0 & \text{otherwise}
    \end{cases}       
\end{equation}
Last, we define $\nu$ such that: for any $s,t,x$, $\nu(s,t,x)$ is the marginal of $m(s,t,x,.)$. 
We now claim that $\nu\in \mathcal{C}$. The global Lipschitz assumption on $H$ and the classical $\frac{1}{2}$-Holder estimate on the Fokker Plank equation \cite[Lemma 3.1.4]{cardaliaguet10} imply~\eqref{holder-t}. Lemma~\ref{le:fp} implies~\eqref{holder-s}.

Finally, the maps $\Psi$ is continuous. Indeed, let $\mu_n\in \mathcal{C}$ converge to $\mu_\infty$ and let $(u_n,m_n)$ be the corresponding solutions. Then the maps $(t,z,x)\mapsto F(t,z,x,\mu_n)$ and $(t,z,x)\mapsto G(t,z,x,\mu_n)$ converge uniformly to $(t,z,x)\mapsto F(t,z,x,\mu_\infty)$ and $(t,z,x)\mapsto G(t,z,x,\mu_\infty)$ thanks to the continuity properties of $\tilde F$ and $\tilde G$. Moreover, as the RHS of the HJ for $u_n$ is bounded in $C^{1+\beta,1+\beta/2}$, then the $u_n$ are uniformly bounded in $C^{2+\beta,1+\beta/2}$, hence converges in $C^{2,1}$ to the unique solution $u_\infty$ of the HJB with RHS $F(\cdot,\mu_\infty)$. Then the $m_n$ are solutions of a linear equation with uniformly Holder continuous coefficient, hence are uniformly bounded in $C^{2+\beta,1+\beta/2}$. 
Then the $m_n$ converge in $C^{1,2}$ to the unique solution of the FP equation corresponding to $u_\infty$. This implies converges of $\nu_n$ to $\nu_\infty$ in $\mathcal{C}$.

By Schauder fixed point Theorem the map $\Psi$ admits a fxed point $\mu$. The corresponding $u,m$ are an equilibrium.

           %            The assumptions on $F$ and $G$ imply $\tilde F(t,z,x,\mu)$ is smooth in $z,x$ and that $\phi: \mu \rightarrow ((t,z,x) \rightarrow \tilde{F}(t,z,x,\mu))$ from $\mathcal{C}$ to $C^{1,1}([0,T] \times \m{T}^d \times \m{R})$ is well-defined and continuous. The same holds for the mapping $\mu \rightarrow ((z,x) \rightarrow \tilde{G}(z,x,\mu))$ from $\mathcal{C}$ to $C^{2,1}([0,T] \times \m{T}^d \times \m{R})$.

Together with the assumptions on $H$, this implies that the Hamilton-Jacobi equation (\ref{existence_HJB}) has a classical smooth solution. %solution in $C^{2,1}$
\end{proof}

  \section{Uniqueness} \label{sec:uniqueness}
  We made the following assumptions: 
  \begin{enumerate}
  \item
  The function $\alpha \rightarrow C(.,\alpha)$ has a Lipschitz gradient, where the Lipschitz constant is uniform in $(s,x)$. This implies that $H$ is strongly convex in $p$, uniformly in $(t,z,x)$. 
  %We assume that $C$ is smooth, convex in $\alpha$, 
  %and that there exists $c>0$, for all $(s,x,\alpha,\alpha')$, 
%  \begin{equation*}
%  \left| \nabla C(s,x,\alpha)-\nabla C(s,x,\alpha') \right| \leq c \left\| \alpha-\alpha' \right\|
%  \end{equation*}
%  This implies that for all $(t,z,x,\alpha,\alpha')$, 
%    \begin{equation*}
%  \left| \nabla \tilde{C}(s,x,\alpha)-\nabla \tilde{C}(s,x,\alpha') \right| \leq c \left\| \alpha-\alpha' \right\|
%  \end{equation*}
%  
%  This implies that the Hamiltonian is $(1/c)$-strongly convex in $p$: for all $(p,p')$, 
%   \begin{equation*}
%   H(t,z,x,p) \geq H(t,z,x,p') +\nabla H(t,z,x,p') \cdot (p-p')+\frac{1}{c} \left\|p-p'\right\|^2
% \end{equation*}
%  
%We assume that a uniform convexity bound on the cost function:
%\begin{equation}\label{uniform-c}
%\underbar c I_n\le D^2_{\alpha\alpha}C(s,x,\alpha) \le \bar c I_n
%\end{equation}
%for some $0 < \underbar c < \bar c$. 
%This implies a uniform convexity bound on the expected cost
%\begin{equation*}
%\underbar c I_n\le D^2_{\alpha\alpha}\tilde C(t,z,x,\alpha) \le \bar c I_n
%\end{equation*}
%which implies a uniform convexity bound on the Hamiltonian
%\begin{equation*}
%\frac{1}{\bar c} I_n\le D^2_{pp} H(t,z,x,p) \le \frac{1}{\underbar c} I_n
%\end{equation*}
\item
We assume that $F$ satisfies the monotonicity condition \cite{LL07} for every $s$:
 the cost functions $F$ and $G$ satisfy
\[\begin{split}&\int \left(\rho^1(x)-\rho^2(x)\right)\left(F(s,x,\rho^1)-F(s,x,\rho^2)\right)\diff x \geq 0,\text{and}\\
&\int \left(\rho^1(x)-\rho^2(x)\right)\left(G(s,x,\rho^1)-G(s,x,\rho^2)\right)\diff x \geq 0.\end{split}\]
  for every $s\in\bbr^m$ and every $\rho^1\neq \rho^2\in \calp(\bbr^n)$. 
\end{enumerate}
\begin{theorem}Under this assumption, if $(u^1,m_1)$ and $(u^2,m_2)$ are two solutions. Then $u^1=u^2$ and $m_1(s,t,x) z=m_2(s,t,x)$ for almost every $s,t$. \end{theorem}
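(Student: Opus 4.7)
The plan is to adapt the Lasry--Lions duality argument to our setting by weighting the usual pairing by the prior density $\varphi_{0,1}(s)$, which restores compatibility between the HJB equation (written in the players' information) and the $s$-parametrized family of Fokker--Planck equations (written in the true state). Set $\bar u=u^1-u^2$, $\bar m = m^1-m^2$, and introduce
\[
\Phi(t)\;=\;\int \varphi_{0,1}(s)\,\bar u(t,z,x)\,\bar m(s,t,z,x)\,\diff s\,\diff z\,\diff x.
\]
Because both $m^i$ start from the same initial data $\bar\rho(x)\delta_0(z)$, one has $\Phi(0)=0$. For $\Phi(T)$, first apply Lemma~\ref{le:fp} to factor $\bar m=\varphi_{st,\sigma^2 t}(z)\bar\tau(t,z,x)$, then use the Bayes identity~\eqref{bayes} to swap $\varphi_{0,1}(s)\varphi_{sT,\sigma^2T}(z)$ for $\varphi_{0,\sigma^2T+T^2}(z)\varphi_{r_T(z),\sigma_T^2}(s)$ and recognize the inner $z$-integral as $\rho^1_{s,T}(x)-\rho^2_{s,T}(x)$. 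This yields
\[
\Phi(T)\;=\;\int \varphi_{0,1}(s)\bigl[G(s,x,\rho^1_{s,T})-G(s,x,\rho^2_{s,T})\bigr]\bigl[\rho^1_{s,T}(x)-\rho^2_{s,T}(x)\bigr]\,\diff s\,\diff x\;\ge\;0,
\]
by the state-by-state monotonicity of $G$.

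The next step is to differentiate $\Phi$ in time, substitute $\partial_t\bar u$ from the HJB and $\partial_t\bar m$ from the Fokker--Planck equation, and integrate by parts (periodic in $x$, Gaussian decay in $z$). The second-order terms cancel since both equations share the same diffusion, leaving three contributions: (i) a Hamiltonian piece $A(t)=\int\varphi_{0,1}(s)\{(H^1-H^2)\bar m + D_x\bar u\cdot(-D_pH^1 m^1 + D_pH^2 m^2)\}$, which by strong convexity of $H$ in $p$ is bounded above by $-c\int\varphi_{0,1}(s)|D_x\bar u|^2(m^1+m^2)\le 0$; (ii) a \emph{drift-mismatch} piece $\int\varphi_{0,1}(s)\,D_z\bar u\cdot(s-r_t(z))\bar m$, reflecting that the drift in $z$ is the posterior mean $r_t(z)$ in the HJB but the true state $s$ in the FP; and (iii) a cost piece $-\int\varphi_{0,1}(s)(\tilde F^1-\tilde F^2)\bar m$.

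The step I expect to be the main obstacle is (ii), the genuinely new ingredient with no analogue in the classical proof. The key computation is
\[
\int \varphi_{0,1}(s)(s-r_t(z))\varphi_{st,\sigma^2 t}(z)\,\diff s\;=\;\varphi_{0,\sigma^2t+t^2}(z)\int \varphi_{r_t(z),\sigma_t^2}(s)(s-r_t(z))\,\diff s\;=\;0,
\]
via~\eqref{bayes}, using that $r_t(z)$ is precisely the posterior mean of $S$ given $Z_t=z$. This is the ``sufficient-statistic'' consistency of Lemma~\ref{le:fp} lifted to the level of the duality pairing, and makes the entire mismatch piece vanish. The identical Bayes swap applied to piece (iii) identifies it with $-\int\varphi_{0,1}(s)[F(s,x,\rho^1_{s,t})-F(s,x,\rho^2_{s,t})][\rho^1_{s,t}(x)-\rho^2_{s,t}(x)]\,\diff s\,\diff x\le 0$ by the state-by-state monotonicity of $F$.

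Putting it all together, $\Phi'(t)\le 0$, so $0\le\Phi(T)\le\Phi(0)=0$. Equality everywhere forces $\Phi'\equiv 0$, and by strong convexity this gives $D_xu^1=D_xu^2$ on $\{m^1+m^2>0\}$. The two $m^i$ then satisfy the same linear Fokker--Planck equation with identical initial data, so $m^1=m^2$ almost everywhere; feeding this back into the HJB with terminal datum $\tilde G$ yields $u^1=u^2$.
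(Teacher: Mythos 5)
Your proposal is correct and follows essentially the same route as the paper: the Lasry--Lions pairing weighted by the prior $\varphi_{0,1}(s)$, strong convexity of $H$ for the Hamiltonian terms, the Bayes identity~\eqref{bayes} together with the factorization of Lemma~\ref{le:fp} to reduce the $\tilde F$ and $\tilde G$ pairings to the state-by-state monotonicity, and the observation that the drift-mismatch term vanishes because $r_t(z)$ is the posterior mean of $S$ given $Z_t=z$. The concluding steps ($D_xu^1=D_xu^2$ on the support, hence $m^1=m^2$, hence $u^1=u^2$) also match the paper's argument.
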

\begin{proof}
Fix $s \in \m{R}$. Set $\bar{u}=u_1-u_2$ and $\bar{m}=m_1-m_2$. Then
\begin{eqnarray*}
\frac{d}{dt} \int_{\m{T}^d \times \m{R}} \bar{u} \bar{m}&=&\int_{\m{T}^d \times \m{R}} (\partial_t \bar{u}) \bar{m}+\bar{u} (\partial_t \bar{m})
\\
&=& \int_{\m{T}^d \times \m{R}} (-\Delta_{z,x} \bar{u} +H(t,z,x,D_x u_1)-H(t,z,x,D_x u_2)-D_z \bar{u} \cdot r_t(z)-\tilde{F}(t,z,x,m_1)+\tilde{F}(t,z,x,m_2)) \bar{m}
\\
&+& \int_{\m{T}^d \times \m{R}} \bar{u} \Delta_{z,x} \bar{m} -\langle D_x\bar{u},m_1 D_p H(t,z,x,D_xu_1)-m_2 D_pH(t,z,x,D_x u_2)\rangle+s \bar{m} \cdot D_z\bar{u},
\end{eqnarray*}
where the last term is obtained after integration by parts. 
Note that
\[\int_{\m{T}^d \times \m{R}} -(\Delta\bar u)\bar m+\bar u(\Delta\bar m)=0.\]

We now rewrite the remaining terms in $H$ in the following way
\begin{align*}
\int_{\m{T}^d \times \m{R}} & (H(t,z,x,D_x u_1)-H(t,z,x,D_x u_2))\bar m-\langle D_x\bar{u},m_1 D_p H(t,z,x,D_xu_1)-m_2 D_pH(t,z,x,D_x u_2)\rangle=\\
&-\int_{\m{T}^d \times \m{R}}m_1\left( H(t,z,x,D_x u_2)-H(t,z,x,D_x u_1)-\langle D_pH(t,z,x,D_xu_1),D_xu_2-D_xu_1\rangle\right)\\
&-\int_{\m{T}^d \times \m{R}}m_2\left( H(t,z,x,D_x u_1)-H(t,z,x,D_x u_2)-\langle D_pH(t,z,x,D_xu_2),D_xu_1-D_xu_2\rangle\right)\\
&\le -\int_{\m{T}^d \times \m{R}}\frac{m_1+m_2}{2C}|D_xu_1-D_xu_2|^2.
\end{align*}
where the inequality follows from the uniform convexity assumption on $H$.

We now claim that from the assumption on $F$ it follows that
\[\int \varphi_{0,1}(s)\diff s\int_{\m{T}^d \times \m{R}}  \bar m(s,t,z,x) (\tilde F(t,z,x,m_1)-\tilde F(t,z,x,m_2))\diff s\diff x\diff z\ge 0\]
For every $t$. 
Indeed, by Lemma~\ref{le:fp} $m_i$ have the form $m_i(s,t,z,x)=\varphi_{st,\sigma^2 t}\tau_i(x|z)$ and therefore \begin{equation}\label{barm}\bar m(s,t,z,x)=\varphi_{st,\sigma^2 t}\bar\tau(x|z)\end{equation} with $\bar \tau=\tau_1-\tau_2$. Therefore,
\[\begin{split}&\int \varphi_{0,1}(s) \bar m(s,t,z,x) (\tilde F(t,z,x,m_1)-\tilde F(t,z,x,m_2))\diff s\diff x\diff z=\\    
%    &\int \varphi_{0,1}(s) \bar m(s,t,z,x) \left(\int_{s'}\varphi_{r_t(z), \sigma^2_t}(s')\left(F(x,\hat m_1(s',t,\cdot),s')-F(x, \hat m_2(s',t,\cdot),s')\right)\diff s'\right) \diff s\diff x\diff z=\\
&\int \varphi_{0,1}(s) \bar m(s,t,z,x) \varphi_{r_t(z), \sigma^2_t}(s')\left(F(x,\hat m_1(s',t,\cdot),s')-F(x, \hat m_2(s',t,\cdot),s')\right)\diff s'\diff s\diff x\diff z=\\
%    &\int\varphi_{0,1}(s)\varphi_{st, \sigma^2  t}(z)\bar\tau(x|z) \left(\int_{s'}\varphi_{r_t(z), \sigma^2_t}(s')\left(F(x, \hat m_1(s',t,\cdot),s')-F(x, \hat m_2(s',t,\cdot),s')\right) \diff s'\right) \diff s \diff x \diff z=\\
    &\int\varphi_{0,1}(s)\varphi_{st, \sigma^2  t}(z)\bar\tau(x|z) \varphi_{r_t(z), \sigma^2_t}(s')\left(F(x, \hat m_1(s',t,\cdot),s')-F(x, \hat m_2(s',t,\cdot),s')\right) \diff s' \diff s \diff x \diff z=\\
    &\int\varphi_{0,\sigma^2 t+t^2}(z) \varphi_{r_t(z), \sigma^2_t}(s')\bar\tau(x|z) \left(F(x, \hat m_1(s',t,\cdot),s')-F(x, \hat m_2(s',t,\cdot),s')\right) \diff s'\diff x\diff z=\\
    &\int \varphi_{0,1}(s')\varphi_{s't,\sigma^2 t}(z)\bar\tau(x|z) \left(F(x, \hat m_1(s',t,\cdot),s')-F(x, \hat m_2(s',t,\cdot),s')\right) \diff s'\diff x\diff z=\\
    &\int \varphi_{0,1}(s') \diff s'\left(\int \left(\hat m_1(s',t,x)-\hat m_2(s',t,x)\right)\left(F(x, \hat m_1(s',t,\cdot),s')-F(x, \hat m_2(s',t,\cdot),s')\right)\diff x\right)\ge 0\end{split}\]
The first equality follows from~\eqref{costtilde}. The second from~\eqref{barm}. The third and fourth from~\eqref{bayes}. The fifth from~\eqref{barm}. The inequality follows from the state-wise monotonicity assumption. 

Similarly, from the monotonicity assumption on $G$ it follows that
\begin{equation}\label{tildeg}
\int_{\bbr\times \m{T}^d \times \m{R}} \varphi_{0,1}(s) \bar m(s,t,z,x) (\tilde G(z,x,m_1)-\tilde G(z,x,m_2))\diff s\diff x\diff z\ge 0.\end{equation}

Finally, for the remaining terms with $D_z\bar u$
\[\int \varphi_{0,1}(s) \bar m(s,t,z,x) D_z\bar u\left(-r_t(z)+s \right)=\bbe \int_x \tau(x|t,Z)D_Z\bar u(t,z,x)\cdot (S-\bbe(S|Z))=0\]
where the expectation is taken w.r.t. $S\sim\caln(0,1)$ and $Z|S=s\sim \caln(s,\sigma^2 t)$.
%\\
%\textcolor{red}{write this in the same way as before?}
%A proof of the same assertion using integrals:
%\begin{align*}
%&\int \varphi_{0,1}(s) \bar m(s,t,z,x) D_z\bar u\left(-r_t(z)+s \right)=
%&\int\varphi_{0,1}(s)\varphi_{s,\sigma^2t}(z)\bar m(s,t,z,x) D_z\bar %u(t,z,x)\cdot(s-\int_{s'}

%\bbe \int_x \tau(x|t,Z)D_Z\bar u(t,z,x)\cdot (-\bbe(S|Z)+S)=0\]

Putting all the estimates together we get 
\[
\frac{\td}{\td t}\int\varphi_{0,1}(s)\td s\int_{\m{T}^d \times \m{R}} \bar{u} \bar{m}\le - \int\varphi_{0,1}(s)\td s\int_{\m{T}^d \times \m{R}}\frac{m_1+m_2}{2C}|D_xu_1-D_xu_2|^2
\]
We integrate this inequality on the time interval $[0,T]$ to obtain
\begin{equation}\label{weintegrate}\int\varphi_{0,1}(s)\td s\int_{\m{T}^d \times \m{R}} \bar u(T)\bar m(T)-\bar u(0)\bar m(0)\le -\int_0^T\int_{\m{T}^d \times \m{R}}\varphi_{0,1}(s)\frac{m_1+m_2}{2C}|D_xu_1-D_xu_2|^2\end{equation}
Note that $\bar m(0)=m_1(0)-m_2(0)=0$ while, as $\bar U(T)=\tilde G(t,z,x,m_1(T))-\tilde G(t,z,x,m_2(T))$,
\[\int\varphi_{0,1}(s)\td s\int_{\m{T}^d \times \m{R}} \bar u(T)\bar m(T)=\int_{\m{T}^d \times \m{R}}\int_{\bbr\times \m{T}^d \times \m{R}} \varphi_{0,1}(s) \bar m(s,t,z,x) (\tilde G(z,x,m_1)-\tilde G(z,x,m_2))\diff s\diff x\diff z\ge 0\]
thanks to~\eqref{tildeg}. So the LHS of~\eqref{weintegrate} is nonnegative, while the RHS is nonpositive, which implies that both sides must vanish. Therefore $D_xu_1 = D_xu_2$ in $\{m_1>0\}\cup \{m_2>0\}$. 
As a consequence $m_2$
actually solves the same equation as $m_1$ (with the same drifts $D_pH(t,z,x,D_xu_1) = D_pH(t,z,x,D_xu_2)$: hence $m_1 = m_2$. Then, in turn, $u_1$ and $u_2$ solve the same HJB equation, so that $u_1 = u_2$.
\end{proof}
\section*{Acknowledgments}
The authors are grateful to Charles Bertucci and Pierre Cardaliaguet for valuable discussions that helped improve this paper.
This work was supported by the French Agence Nationale de la Recherche (ANR) under reference ANR-21-CE40-0020 (CONVERGENCE project). Part of this work was done during a 1-year visit of Bruno Ziliotto to the Center for Mathematical Modeling (CMM) at University of Chile in 2023, under the IRL program of CNRS. 
\bibliography{biblio}

\end{document}